\documentclass[conference]{IEEEtran}
\pdfoutput=1
\usepackage{cite}
\usepackage{amsmath,amssymb,amsfonts,amsthm}
\usepackage{graphicx}
\usepackage{textcomp}
\usepackage{physics}
\usepackage{mathrsfs}
\usepackage{float}
\usepackage{mathtools}
\usepackage{hyperref}

\newcommand{\E}{\mathbb{E}}
\newcommand{\expvalb}[2]{\E_{#1}\left[ #2 \right]}
\newcommand{\vecs}{\mathrm{vecs}}
\newcommand{\vecl}{\mathrm{vecl}}

\newtheorem{prop}{Proposition}
\newtheorem{lem}{Lemma}
\newtheorem{rem}{Remark}
\newtheorem{definition}{Definition}
\newtheorem{assumption}{Assumption}

\begin{document}
\title{On Solving Continuous-Discrete Projection Filters via Sum-of-Squares Relaxation}
\author{\IEEEauthorblockN{Muhammad F. Emzir}
\IEEEauthorblockA{Control and Instrumentation Engineering, IRC for Smart Mobility and Logistics\\
King Fahd University of Petroleum and Minerals, Dhahran, Saudi Arabia.
muhammad.emzir@kfupm.edu.sa}
\thanks{This work was supported in part by KFUPM-DROC, under Grant ER242009 and the IRC Smart Mobility and Logistic KFUPM under Grant INML2407 and INML2528.}}
\maketitle

\begin{abstract}
The continuous-discrete projection filter offers a rigorous framework to approximate the solution of the nonlinear state estimation problems. However, it suffers from numerical instability during the prediction phase when integration errors force the natural parameters outside their admissible domain. To address this issue, we introduce the sum-of-squares (SOS) relaxation to constrain the evolution of the natural parameters within the admissible domain. By parameterizing the underlying SOS matrix using the log-Cholesky map, we derive a projected ordinary differential equation (ODE) that inherently preserves the necessary positivity constraints without requiring the computationally expensive online optimization checks associated with previous semi-infinite programming approach. We provide a theoretical derivation of this positivity-preserving propagation scheme and present the explicit SOS-relaxed evolution equations for the Gaussian case.
\end{abstract}

\begin{IEEEkeywords}
Estimation, Stochastic Filter, Kalman Filtering, Projection Filter
\end{IEEEkeywords}

\section{Introduction}

The continuous--discrete projection filter provides an approximation to the solution of nonlinear state estimation problems in which the state dynamics evolves according to stochastic differential equations (SDEs) and measurements arrive at discrete time instants \cite{brigo1999}. Compared to its continuous-continuous counterpart, the continuous-discrete projection filter has received less attention in the literature. In our recent work, we developed a generalized continuous-discrete projection filter for exponential families with conjugate likelihood functions \cite{emzir2025}.

In the conjugate likelihood case, the projection filter enjoys certain advantages because the Bayesian update step is exact; see also \cite[Section 6]{brigo1999}. While initially developed for univariate or Gaussian families only, the projection filter for the exponential family was later extended to multivariate settings \cite{emzir2023, emzir2023a, emzir2025}. However, a critical issue with these early developments was the instability of the natural parameter propagation during the prediction phase. Specifically, the numerical approximation does not guarantee that the natural parameters remain within the admissible region. Some early attempts to address this issue employed semi-infinite programming (SIP) \cite{emzir2025b}. This approach requires modifying the ODE solver to include a checking mechanism between integration steps, applying a projection via SIP whenever the natural parameters leave the feasible set. Although the accuracy of this method depends on the number of uniform samples on the unit ball surface, the precise relationship between the local projection error and the sample count remains an open question.

In this paper, we introduce an alternative approach that maintains the natural parameters within the admissible space $\Theta$ using sum-of-squares (SOS) relaxation. Following the setup in \cite{emzir2025b}, we consider natural statistics dominated by even-order monomials. In this setting, the natural statistics can be partitioned into a lower component $c_{\ell}$ and an upper component $c_{u}$, where $c_u$ contains monomials of even order. The term ``dominated'' indicates that $\|c_\ell(x)\|/\|c_u(x)\| \to 0$ as $\|x\| \to \infty$ pointwise. Consequently, the natural parameter space decomposes as $\mathbb{R}^{m_d} \times \Theta_u$, where $\Theta_u$ consists of coefficients corresponding to negative homogeneous polynomials of even order.

The SOS approach enforces $c_u^\top \theta_u$ to be the negative of a sum-of-squares polynomial by finding a positive definite matrix that minimizes the distance to $\theta_u$. This convex optimization procedure is known as SOS relaxation \cite{lasserre2007}. Recently, Wu et al.\ \cite{wu2024} applied SOS methods to discrete-discrete nonlinear filtering for solving multi-dimensional Hamburger moment problems. In their framework, the conditional density is approximated by a surrogate density---a density divided by a positive polynomial---where the SOS method determines the positive polynomial such that the surrogate density matches the target moments exactly. Our approach differs fundamentally: rather than approximating the density directly, we use SOS relaxation to constrain the natural parameters to remain within $\Theta$.

The main contributions of this paper are as follows. First, we introduce SOS relaxation to approximate a perturbed density from a class of exponential families. This approach is equivalent to solving a convex optimization problem to find a positive definite matrix that best approximates the coefficients of the dominant natural statistics. Secondly, we project the ordinary differential equations for the natural parameter vector onto the set of SOS-relaxed natural parameters. This yields an ODE that inherently remains within $\Theta$. Unlike the semi-infinite programming procedure developed previously \cite{emzir2025b}, this approach does not require online optimization during the propagation of the natural parameters. Finally, we derive the explicit SOS-relaxed natural parameter ODEs for the Gaussian family case.

Due to the page limitation, for an extended discussion on the projection filter, we direct the reader to \cite{brigo1999} or our recent works \cite{emzir2025, emzir2023, emzir2023a}.

\section{Notation}\label{sec:Notation}
For an $A \in \mathbb{R}^{n \times n}$, we denote by $\lfloor A \rfloor$ the $n \times n$ matrix formed from the strictly lower triangular part of $A$. We denote the diagonal part of $A$ as $\mathbb{D}(A)$. For a square matrix $S$, we define a lower triangular matrix $(S)_{\frac{1}{2}} \coloneqq \lfloor S \rfloor + \frac{1}{2} \mathbb{D}(S)$. The tangent space of a smooth manifold $M$ at a point $p$ is denoted as $T_p M$. For a $C^\infty$ map $F : N \to M$ between two smooth manifolds $N$ and $M$, we denote the differential of $F$ at $p \in N$ as $F_{\ast,p} : T_p N \to T_{F(p)}M$. We refer to a homogeneous polynomial as a form; see \cite[Chapter 1]{marshall2008}, \cite[Chapter 4]{blekherman2013}. We denote the class of forms of degree $2r$ by $\mathcal{H}_{[x],r}$, while the class of sum of squared forms of degree $2r$ is denoted by $\Sigma_{[x],r}$, and the class of positive forms of degree $2r$ is denoted by $\mathcal{P}_{[x],r}$.

\section{Problem Setup and Present Challenges}\label{sec:Challenges}
Consider the state-space model
\begin{subequations}
	\label{eqs:nonlinear_SDE}
	\begin{align}
		dx_t &= f(x_t) \, dt + \varrho(x_t) \, dW_t,\\
		y_k &\sim p(y_k\mid x_k) \propto \exp(-\ell_y).
	\end{align}
\end{subequations}
where $\Delta t >0$ is a positive sampling time, $x_t \in \mathbb{R}^d$ is the state vector, $\{ W_t , t \geq 0\}$ is a Wiener process in $\mathbb{R}^{d_w}$, and $\ell_y$ is the negative log-likelihood of the discrete measurement process. The functions $f$ and $\varrho$ satisfy the standard conditions for existence and uniqueness of the SDE solution \cite{brigo1999}.

Denote the class of the probability densities $\mathcal{P}$ with respect to the Lebesgue measure on $\mathcal{X} \subseteq \mathbb{R}^d$ as $\mathcal{P} = \{p \in L^1 : \int_\mathcal{X} p(x) \, dx = 1, p(x) \geq 0, \forall x \in \mathcal{X}\}$. The exponential family with natural statistics $c:\mathbb{R}^d \to \mathbb{R}^m$ is defined as $\text{EM}(c) \coloneqq \left\{ p \in \mathcal{P} \colon p(x) = \exp(c(x)^\top\theta - \psi(\theta)) \right\}$, where $\theta \in \Theta \subset \mathbb{R}^m$ is the natural parameter. The natural statistics in $c$ are assumed to be linearly independent. The corresponding natural parameter space $\Theta$ is given by $\Theta \coloneqq \left\{ \theta \in \mathbb{R}^m\colon \int_\mathcal{D} \exp(c(x)^\top \theta) \, dx < \infty \right\}$,
where $\mathcal{D}\subseteq \mathcal{X}$ is the support of $\exp(c(x)^\top \theta)$. Among many exponential families, we are interested in the regular exponential family which has its natural parameter space $\Theta$ as an open subset of $\mathbb{R}^m$. One of the virtues of the exponential family $\text{EM}(c)$ is that the expected value of $c$ with respect to $p_\theta \in \text{EM}(c)$ and its covariance can be computed via the cumulant-generating function (i.e., the log Laplace transform or log partition function \cite{brown1986,emzir2023}). The cumulant-generating function is defined by
\begin{equation}
  \psi(\theta) = \log \left[ \int_\mathcal{D} \exp(c(x)^\top \theta) \, dx \right], \quad \theta \in \Theta. \label{eq:cumulant-generating}
\end{equation}

On the continuous-discrete setting, the exponential family projection filter amounts to propagating the natural parameters in two steps: the prediction step, where the projected square-root of the Fokker--Planck equation is propagated, and the update step, where the measurement is used to update the natural parameters. The prediction step requires solving the following ordinary differential equation (ODE) for the natural parameters:
\begin{equation}
  \dv{\theta_t}{t} = g(\theta_t)^{-1}\E_{\theta} \left[\mathcal{L}(c)\right], \quad t \in (k\Delta t, (k+1) \Delta t), \label{eq:Projected_Fokker_Planck}
\end{equation}
where $g(\theta)$ is the Fisher information metric, and $\mathcal{L}(\varphi)
  = \sum_{i=1}^df_i(x)\,\pdv{\varphi(x)}{x_i}
  + \tfrac{1}{2}\sum_{i,j=1}^d (\varrho \varrho^\top)_{ij}(x)\,\pdv[2]{\varphi(x)}{x_i}{x_j}$ is the backward Kolmogorov diffusion operator.

Since the numerical integration is not exact, the increment $\dv{\theta}{t}$ calculated by \eqref{eq:Projected_Fokker_Planck} at each step is only an approximation. Therefore, even in the case of exact Bayesian update as is the case in the conjugate exponential family, the local error in the projection filter predictive steps is present and consists of the quadrature errors in the numerical integration and the projection error due to projecting the $L^2(\mathcal{X})$ vector onto the tangent space (to be identified in the next section). These two local error sources are then accumulated when using the ODE solver to solve \eqref{eq:Projected_Fokker_Planck}.

Upon solving \eqref{eq:Projected_Fokker_Planck}, due to the integration errors of the ODE solver and the sparse-grid, the vector of natural parameters $\theta$ might leave the space of natural parameters $\Theta$. In practice, when this occurs, it can be identified by the value of $c(x)^\top \theta$ in \eqref{eq:cumulant-generating} that does not go to $-\infty$ as $\|x\|\to \infty$. When this function is evaluated on the quadrature nodes, a small fraction of the quadrature nodes will have extremely high values. This leads to an invalid value of $\psi(\theta)$. As a consequence, the Fisher information metric $g(\theta)$ is no longer positive definite which eventually ends the propagation of  \eqref{eq:Projected_Fokker_Planck}.
\section{Proposed Methods}\label{sec:Proposed_Method}

Having established the problem setting and the challenges associated with maintaining feasibility of the natural parameters, we now present our proposed solution based on sum-of-squares relaxation.

\subsection{Sum-of-Squares Relaxation}\label{sec:SOS_Projection}

In this section, we propose an approximation method based on sum-of-square, a well-known method in polynomial optimization \cite[Chapter 2]{lasserre2010}. Consider the following set of monomials with order equal to $r \in \mathbb{N}$:
$
  w_r (x) = \left[ x^\alpha \right]_{\alpha \in \mathbb{N}^d, \abs{\alpha} = r}.
$
Shortly speaking, in the SOS relaxation method, given a positive homogeneous polynomial of degree $2r$, $v(x)$, we seek a positive definite matrix $P$ such that $w_r^\top P w_r$ is as close as possible to $v(x)$. In 1888, Hilbert found that, $\mathcal{P}_{[x],r} = \Sigma_{[x],r}$ only holds for $r=1$ with any $d$, $d=1$ and $2$ with any $r$, and $d=3$ with $r=2$; \cite{hilbert1888}, \cite[Theorem 1.2.6]{marshall2008}. Other than these cases, $\Sigma_{[x],r} \subsetneq \mathcal{P}_{[x],r}$. Let $\mathcal{X}=\mathbb{R}^d$. As we briefly discussed in the previous section, the natural statistics $c$ is to be divided into two sections, $c_u: \mathbb{R}^{d} \to \mathbb{R}^{m_u}$ and $c_l : \mathbb{R}^{d} \to \mathbb{R}^{m_l}$, where $m_u + m_l = m$ and $c_u=w_{2r}$. Correspondingly, we also divide the natural parameters into $\theta^\top = [\theta_l^\top,\theta_u^\top]$. For any $v \in \mathcal{H}_{[x],r}$, due to the homogeneity, we have $v(x) = v(\|x\| u) = \|x\|^{2r} v(u)$ where $u = x/\|x\| \in \mathcal{S}^{d-1}$ (the unit sphere in $\mathbb{R}^d$). The ``direction" $u$ on the unit sphere completely determines the sign of $v(x)$. We set the following additional assumption on the natural statistics:

\begin{assumption}\label{asm:c_u_dominated_c_l}
The natural statistics vector $c_u$ is selected as $w_{2r} \in \mathcal{H}_{[x],r}$ for some $r \in \mathbb{N}$ such that all elements in $c^\top  = [c_u^\top, c_l^\top]$ are linearly independent and $\lim_{\|x\| \to \infty} \frac{\|c_l(x)\|}{\|c_u(x)\|} = 0$ pointwise.
\end{assumption}

Assumption \ref{asm:c_u_dominated_c_l} simplifies the verification of a $\theta$ to be in the natural parameter space $\Theta$ by ensuring $c_u^\top \theta_u$ is a negative form. In particular, it guarantees that constraining only $c_u^\top \theta_u < 0$ on the unit sphere is \emph{sufficient} to ensure $\theta \in \Theta$, since the lower-order terms $c_d$ cannot cause divergence of $\int \exp(c^\top \theta)\, dx$. Without this partition, one would need to characterize the full natural parameter space $\Theta$, which lacks a tractable description for general exponential families. This assumption is satisfied whenever the highest-order terms in the natural statistics are even-degree monomials that grow faster than all other terms---for example, the univariate Gaussian family where $c = [x, x^2]$ with $c_u = x^2$.

Suppose for some $\theta_u \in \mathbb{R}^{m_u}$, $-c_u^\top \theta_u \notin \Sigma_{[x],r}$. We are interested in finding a negative of an SOS form $v$
\begin{gather}
\begin{aligned}
v &= \sum_{|\alpha| = 2r} \hat{v}_\alpha x^\alpha
&= -w_r (x)^\top P w_r (x)
  \end{aligned}\label{eq:p_neP_wr_Q_wr}
\end{gather}
such that it best approximates $c_u^\top \theta_u$. As it is clear from the dimensionality, the polynomial coefficient of $v$, $\hat{v} \in \mathbb{R}^{s_h (2r, d)}$, and $P \in \mathbb{R}^{s_h (r,d) \times s_h (r,d)}$ where $s_h (m,n) \coloneqq \binom{m+n-1}{n-1}$; \cite{marshall2008}. Therefore,
$
\tfrac{(s_h (r, d)^2)}{s_h (2r, d)} = \prod_{j=1}^{d-1} \tfrac{r^2 + 2 r j + j ^2}{j ^2 + 2 r j} > 1,
$
which means the matrix $P$ in \eqref{eq:p_neP_wr_Q_wr} contains more elements compared to $\hat{v}$. Hence, in this representation the $P$ is not unique, as there are infinite possibilities of solution that satisfy \eqref{eq:p_from_G}. The matrix $P$ above can always be chosen as a symmetric matrix. The approximation of $-c_u^\top \theta_u$ by an element from $\Sigma_{[x],r}$ can be formulated as finding a $P \succ 0$ such that $v = -w_r ^\top P w_r$ is as close as possible to $c_u^\top \theta_u$. For this work, we measure the distance of two polynomials by the Euclidean norm of the difference of their coefficients; e.g., for $v_1,v_2 \in \mathcal{H}_{[x],r},v_1 = \hat{v}_1^\top w_{2r},v_2=\hat{v}_2^\top w_{2r} $, we define $\norm{v_1 - v_2}_{\mathcal{H}_{[x],r}} = \norm{\hat{v}_1-\hat{v}_2}_2$.

In order to compare the coefficient of a positive form $v$ and its SOS approximation, we need to be able to convert the positive definite matrix $P$ in  \eqref{eq:p_neP_wr_Q_wr} to the corresponding polynomial coefficient. Let $z_1 \coloneqq s_h (r,d)$ and $z_2 \coloneqq s_h (2r,d)$. If we select $P$ to be symmetric, then $P$ is fully determined by its lower triangular elements. Let $\mathrm{vec}: \mathbb{R}^{n\times m} \to \mathbb{R}^{n m}$ be the operator that stacks an $\mathbb{R}^{n\times m}$ matrix into a column vector, $\mathrm{lt}(n) = \frac{1}{2}(n (n+1))$, and $\vecs: \mathbb{R}^{n \times n} \to \mathbb{R}^{\mathrm{lt}(n)}$ be the mapping that maps the diagonals of its lower triangular elements to a vector; i.e., $\vecs(P) \coloneqq [d_{P,0}^\top, d_{P,1}^\top, \hdots, d_{P,n-1}]^\top,$ where $d_{P,0}$, is the main diagonal of $P$, and $d_{P,i}$ is the $i$th sub-diagonals of $P$, respectively. We can write the coefficient vector of the negative SOS form $v = -w_r^\top P w_r$ as
\begin{equation}
  \hat{v}(P) \coloneqq -T_p \vecs(P), \label{eq:p_from_G}
\end{equation}
where $T_p \in \mathbb{R}^{z_2 \times \mathrm{lt}(z_1)}$. The following proposition explicitly gives $T_p$.
\begin{prop}\label{prp:T_p}
  The linear transformation matrix $T_p$ in \eqref{eq:p_from_G} is given by:
  \begin{equation}
    T_p = (T T^\top)^{-1} T K,
  \end{equation}
  where $K \in \mathbb{R}^{ \mathrm{lt}(z_1) \times  \mathrm{lt}(z_1)}$ is a diagonal matrix with the diagonal elements given by
  \begin{equation}
    K_{k,k} = \begin{cases}
      1 &, ~\quad 1 \leq k \leq z_1,\\
      2 &, ~\quad k > z_1.
    \end{cases}\label{eq:K_diagonals}
  \end{equation}
  and,
  $T \in \mathbb{R}^{z_2 \times \frac{1}{2} (z_1 (z_1+1))}$ satisfying
  \begin{gather}
    \begin{aligned}
    T_{l,k} =& \begin{cases}
      1 &, ~\quad \alpha_l = \beta_i + \beta_j, k=k_0 \\
      0 &, ~\quad \text{otherwise},
    \end{cases}\\
    k_0 =& \begin{cases}
        i &, i=j\\
        z_1 + (i-j-1) (z_1 - \frac{i-j}{2}) + j&, i>j
      \end{cases}.
  \end{aligned}\label{eq:T_explicit}
  \end{gather}
  where $x^{\beta_i+\beta_j}$ is the $i,j$ component of  $(w_r w_r^\top)$  and $x^{\alpha_l}$ is the $l$-th component of $w_{2r}$.
\end{prop}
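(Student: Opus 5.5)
The plan is a direct coefficient-matching computation. I would expand the quadratic form entrywise:
\[
w_r(x)^\top P\, w_r(x) = \sum_{i,j=1}^{z_1} P_{ij}\, x^{\beta_i+\beta_j}.
\]
Since every $\beta_i+\beta_j$ has $|\beta_i+\beta_j| = 2r$, each term is a monomial of $w_{2r}$. Grouping by the multi-index $\alpha_l = \beta_i+\beta_j$ and using symmetry of $P$, the coefficient of $x^{\alpha_l}$ is
\[
\sum_{i:\,2\beta_i=\alpha_l} P_{ii} \;+\; 2\sum_{i>j:\,\beta_i+\beta_j=\alpha_l} P_{ij}.
\]
This already shows the map $P\mapsto \hat v(P)$ is linear in $\vecs(P)$, which justifies the form of \eqref{eq:p_from_G}.

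The second step checks the index bookkeeping of $\vecs$. Under the sub-diagonal-by-sub-diagonal ordering, the main diagonal occupies positions $1,\dots,z_1$. The $q$-th sub-diagonal, with $q=i-j\ge 1$, has $z_1-q$ entries and starts after $z_1+\sum_{s=1}^{q-1}(z_1-s) = z_1+(q-1)\bigl(z_1-\tfrac{q}{2}\bigr)$ earlier entries. Within it, the entry $(i,j)$ sits at position $j$. This reproduces $k_0$ in \eqref{eq:T_explicit}. Consequently the $0/1$ matrix $T$ records exactly which stored entries of $P$ contribute to $x^{\alpha_l}$. The weight in the coefficient formula above is $1$ for diagonal entries and $2$ for off-diagonal entries, which is precisely $K$ in \eqref{eq:K_diagonals}. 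Hence the raw coefficient vector of $w_r^\top P w_r$ is $TK\,\vecs(P)$.

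The third step accounts for the prefactor $(TT^\top)^{-1}$. First, $TT^\top$ is diagonal, because each column of $T$ has exactly one nonzero entry: each pair $(i,j)$ produces a single monomial. Its $l$-th diagonal entry is the number of unordered pairs $\{i,j\}$ with $\beta_i+\beta_j=\alpha_l$. This number is at least one, since any $\alpha$ with $|\alpha|=2r$ splits as a sum of two multi-indices of order $r$. So $TT^\top$ is invertible. Equivalently, $(TT^\top)^{-1}T$ is the Moore--Penrose pseudo-inverse of $T^\top$, since $T$ has full row rank.

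The main obstacle is this last step. On the naive convention, the coefficient of $x^{\alpha_l}$ is exactly $(TK\vecs(P))_l$ with no normalization. The factor $(TT^\top)^{-1}$ then rescales each coefficient by the reciprocal of its multiplicity. To close the proof I would look for the convention under which this rescaling is correct, and I would try two candidates first.

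1. The coordinates of $\hat v$ may be taken with respect to a scaled basis of $w_{2r}$, in which $x^{\alpha_l}$ carries the weight $(TT^\top)_{ll}$.
2. $T_p$ may be defined as the least-squares solution, that is, the map that averages the contributions of the $(TT^\top)_{ll}$ pairs producing $x^{\alpha_l}$. This is what the pseudo-inverse structure suggests.

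If neither convention holds, the correct statement of the proposition would be $T_p = TK$, and I would record that instead.
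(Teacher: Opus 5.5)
\textbf{Gap: the proposal stops short of the conclusion its own computation forces.} Your first three steps are correct. The issue is that you leave the outcome open when it is already settled. As stated, the proposition is false whenever some $x^{\alpha_l}$ arises from more than one unordered pair $\{i,j\}$. This happens for every $r\ge 2$, $d\ge 2$: for instance $2e_1+(2r-2)e_2$ arises from both $\{2e_1+(r-2)e_2,\,re_2\}$ and $\{e_1+(r-1)e_2,\,e_1+(r-1)e_2\}$.

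Neither candidate convention is available.
\begin{enumerate}
\item Equation \eqref{eq:p_neP_wr_Q_wr} fixes $\hat v$ as plain monomial coefficients. They must be in the same basis as $\theta_u$, because $c_u=w_{2r}$ and the paper later reads $\theta_u=-T_p\vecs(P)$ as $c_u^\top\theta_u=-w_r^\top P w_r$. This rules out your first candidate.
\item $\vecs$ maps $\mathcal{S}_{z_1}$ bijectively onto $\mathbb{R}^{\mathrm{lt}(z_1)}$. So the linear map $\vecs(P)\mapsto-\hat v(P)$ has exactly one matrix, namely $TK$, and there is no room for a least-squares redefinition. This rules out your second candidate.
\end{enumerate}

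Concretely, take $d=r=2$, $w_2=(x_1^2,x_1x_2,x_2^2)^\top$ and $P=I$. Then $w_2^\top P w_2=x_1^4+x_1^2x_2^2+x_2^4$. The row of $T$ for $x_1^2x_2^2$ has ones in columns $k_0(2,2)=2$ and $k_0(3,1)=6$. So the stated formula gives $\tfrac{1}{2}(P_{22}+2P_{31})=\tfrac{1}{2}$ instead of $1$.

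The error also matters downstream. Take $P=vv^\top+\epsilon I$ with $v=(1,2,1)^\top$. The stated $T_p$ then yields the form $(1+\epsilon)x_1^4+4x_1^3x_2+(3+\tfrac{\epsilon}{2})x_1^2x_2^2+4x_1x_2^3+(1+\epsilon)x_2^4$. At $x=(1,-1)$ this equals $-3+\tfrac{5}{2}\epsilon<0$ for small $\epsilon$. So $P\succ 0$ would not even keep $\theta$ in $\Theta$.

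This also locates the flaw in the paper's own proof, which you were effectively trying to reproduce. The proof asserts $w_{2r}=Tw_{r^2\mathrm{sym}}$. In fact $Tw_{r^2\mathrm{sym}}=(TT^\top)w_{2r}$, and the correct identity is $w_{r^2\mathrm{sym}}=T^\top w_{2r}$. The proof then equates coefficients with respect to $w_{r^2\mathrm{sym}}$, whose entries repeat monomials and so are linearly dependent, to get $K\vecs(P)=-T^\top\hat v$. That system is overdetermined and has no solution in general. Multiplying by $T$ and inverting $TT^\top$ is exactly your second candidate: a least-squares fit that averages the contributions to $x^{\alpha_l}$ instead of summing them.

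With the correct identity, $w_r^\top P w_r=\vecs(P)^\top K T^\top w_{2r}$ gives $T_p=TK$ in one line. The two formulas agree iff $TT^\top=I$, i.e.\ iff $r=1$ or $d=1$. Hence the Gaussian specialization $T_p=K$ used later is unaffected. For general $r\ge 2$, $d\ge 2$, you should state and prove $T_p=TK$ outright.
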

\begin{proof}

Let $w_{r^2 \mathrm{sym}} \coloneqq \vecs(w_r w_r^\top)$. Since $P$ is symmetric, $\mathrm{vec}(P)^\top \mathrm{vec}(w_r w_r^\top) = \vecs(P)^\top K w_{r^2 \mathrm{sym}}$ for some diagonal $K$. Separating diagonal and off-diagonal terms gives
$
  \sum_i^{z_1} P_{ii}w_{r,i}^2 + \sum_{i>j}^{z_1} P_{ij}w_{r,i}w_{r,j} = \sum_{i\geq j} k_{i,j} P_{ij}w_{r,i}w_{r,j}.
$
Equating both sides, we get $K_{i,i}=1$ and $K_{i,j}=2$ if $i>j$. This implies \eqref{eq:K_diagonals}.
Next, consider
$
  w_{2r} = T w_{r^2 \mathrm{sym}}.
$
Similarly, equating the element on the left and the right, we obtain \eqref{eq:T_explicit}.
Now, for symmetric $P$, $-\hat{v}^\top w_{2r} = w_r^\top P w_r = \mathrm{vec}(P)^\top \mathrm{vec}(w_r w_r^\top)$.
Using \eqref{eq:K_diagonals} and \eqref{eq:T_explicit}, $\hat{v}^\top w_{2r} = \hat{v}^\top T w_{r^2 \mathrm{sym}}$ and $w_r^\top P w_r = \vecs(P)^\top K w_{r^2 \mathrm{sym}}$, so $\vecs(P)^\top K = -\hat{v}^\top T$. Hence, taking a transpose on both sides and multiplying both sides with $T$ from the left, we get $T K^\top \vecs(P) = - T T^\top \hat{v}$. The matrix $T$ is full row-rank, and hence $T T^\top$ is invertible; see \cite[\textsection 0.4.6]{horn2012}. Therefore, we obtain $- (T T^\top)^{-1} T K^\top \vecs(P) = \hat{v}$. Hence, $T_p = (T T^\top)^{-1} T K^\top$ as required.
\end{proof}

Using $T_p$ given in the proposition above, the distance between $\theta_u^\top c_u$ and $-w_r^\top P w_r$
is given by $\norm{\theta_u^\top c_u + w_r^\top P w_r}_{\mathcal{H}_{[x],r}}
= \norm{\theta_u + \hat{v}(P)}_2$.
Therefore, for any given $\theta_u \in \mathbb{R}^{m_u}$, the negative-SOS coefficient is given by solving the following semi-definite programming:
\begin{align*}
  \min & \norm{\theta_u + \hat{v}(P)}_2 \\
  \text{subject to }& P \succ 0.
\end{align*}
Once we found the optimizer $P$, then the approximated form is then given by $v = \hat{v}(P)^\top w_{2r}$.

\subsection{Propagating SOS Matrix via Log-Cholesky Coordinates}\label{sec:Propagating_SOS_via_LogCholesky}

In this section, we will derive an ordinary differential equation that ensures the evolution of the natural parameters always stays in the SOS-relaxed space:
\begin{equation}
  \Theta_{SOS} \coloneqq \left\{ \theta \in \mathbb{R}^m:  \theta_l \in \mathbb{R}^{m_l}, \theta_u = -T_p \hat{v}(P); P \succ 0 \right\}
\end{equation}
While we can derive explicitly the ODE of $P_t \in \mathcal{S}_{z_1}^+$ for given $\theta_{u,t}$, the solver for this ODE needs to abide the manifold structure of $\mathcal{S}_{z_1}^+$. Standard ODE solvers such as Runge--Kutta are known to fail to preserve the positivity of the solution of ODE in $\mathcal{S}^+_{z_1}$. Apart from traditional ODE solvers, there is a class of geometric ODE solvers that respect this $\mathcal{S}^+_{z_1}$ structure like \cite{drumetz2024,munthe-kaas2023}. Instead of using these geometric ODE solvers, we will solve the problem by using the log-Cholesky chart introduced in \cite{lin2019}.

Suppose at time $t_k = k \Delta t$, we have obtained $\theta_{u,k}$ such that $c_u^\top \theta_{u,k}$ is a negative SOS form, or if it is not the case, then we apply the projection method detailed in the previous section. Therefore, there exists a positive definite matrix $P_k$ such that $c_u^\top \theta_{u,k} = - w_r^\top P_k w_r$. Between the observation time, i.e., for $t \in [t_k, t_{k+1})$, we propagate the natural $\theta$ via \eqref{eq:Projected_Fokker_Planck}.
Let $P_t$ be the SOS matrix that satisfies $c_u^\top \theta_{u,t} = - w_r^\top P_t w_r$, where at $t_k=k\Delta t$, $P_{t_k} = P_k$. If $c_u^\top \theta_{u,k}$ is the negative of an SOS form, then from Proposition \ref{prp:T_p}, we have $\theta_{u,k} = -T_p \vecs(P_k)$. Hence, $P_k = -\vecs^{-1}(T_p^+ \theta_{u,k})$.
Therefore,
\begin{equation}
  \frac{d P_{t}}{d t} = -\vecs^{-1}\left(T_p^+ \begin{bmatrix}0_{m_u \times m_l} & I_{m_u}\end{bmatrix} \frac{d \theta_t}{d t}\right)\label{eq:dP_dt}
\end{equation}
 Let $\mathscr{L} :  \mathcal{S}^+_n \to \mathcal{L}^+_n$ be the Cholesky map that maps a positive definite matrix $P$ to its Log-Cholesky representation. It is well-known that the Cholesky map is a diffeomorphism between the cone of positive definite matrices $\mathcal{S}^+_n$ and the cone of lower triangular matrices with positive diagonal elements $\mathcal{L}^+_n$; see \cite{lin2019}. In order to ensure that $P_t$ remains positive definite during the propagation, we use the global mapping introduced in \cite{lin2019} that maps its Cholesky factor $L_t$ to $\mathbb{R}^{\frac{1}{2}n(n+1)}$ via its Log-Cholesky map as follows \cite[Proposition 3]{lin2019}
\begin{equation}
  \vecl(L) \coloneqq \begin{bmatrix}
    \log(d_{L,0})^\top&
    d_{L,1}^\top&
    \hdots&
    d_{L,n-1}^\top
  \end{bmatrix}^\top
\end{equation}
The map $\vecl$ stacks the main diagonal $d_{L,0}$, and subdiagonals $d_{L,i}, i=1,\ldots, n-1$, of the Cholesky factor $L$ of $P$ and applying log map only to the main diagonal elements of $L$.  Let the log-cholesky coordinate of $P_t$ be denoted by $\nu = \vecl \circ \mathscr{L}$.  The time derivative of the log-Cholesky coordinates of $P_t$ is given by $\frac{d \nu_t}{d t} = \vecl_{\ast,L_t} \circ \mathscr{L}_{\ast,P_t} \circ \frac{d P_t}{d t}$, where $\nu_t \coloneqq \nu(P_t), L_t \coloneqq \mathscr{L}(P_t)$; i.e., $\nu_t = \vecl(L_t)$. The Jacobian of $\vecl$ can be written explicitly as
  \begin{equation*}
    \vecl_{\ast,L} = \mathrm{diag}\left(\begin{bmatrix}
      \frac{1}{d_{L,0}}\\
      1_{\mathrm{lt}(n)-n}
    \end{bmatrix}\right).
  \end{equation*}
where $\frac{1}{d_{L,0}}$ is the reciprocal of the main diagonal of $L$, and $1_{\mathrm{lt}(n)-n}$ is a column vector of length $\mathrm{lt}(n)-n$ with all elements equal to one. Therefore, instead of propagating \eqref{eq:Projected_Fokker_Planck}, we propagate the following ODE between the observation time:

\begin{subequations}
\begin{align}
\frac{d \theta_{l,t}}{d t} =& \begin{bmatrix}I_{m_l} & 0_{m_l \times m_u}\end{bmatrix} g(\theta_t)^{-1} \frac{d \eta_t}{d t}\\
\frac{d \nu_t}{d t} =& \vecl_\ast (L_t) \circ \mathscr{L}_{\ast}(P_t) \circ \frac{d P_t}{d t}\label{eq:dnu_dt_original}
\end{align}
\end{subequations}
where $\frac{d \eta_t}{d t} \coloneqq \expvalb{\theta_t}{\mathcal{L}[c]}$, and $\frac{d P_{t}}{d t}$ is computed via
\begin{equation}
  \frac{d P_{t}}{d t} = -\vecs^{-1}\left(T_p^+ \begin{bmatrix}0_{m_u \times m_l} & I_{m_u}\end{bmatrix} g(\theta_t)^{-1} \frac{d \eta_t}{d t}\right)\label{eq:dP_dt_explicit}
\end{equation}
Since $\frac{d P_{t}}{d t}$ \eqref{eq:dP_dt_explicit} involves $\vecs^{-1}$, and \eqref{eq:dnu_dt_original} involves $\vecl$, the ODE \eqref{eq:dnu_dt_original} can be given explicitly as function of $\tfrac{d \eta_t}{d t}$, which is our next objective. For clarity in the following theoretical development we introduce the following definitions and lemmas.
\begin{definition}\label{def:Dn_Mn_id}
  \begin{enumerate}

    \item The duplication matrix of dimension $n$, $D_n \in \mathbb{R}^{n^2 \times \mathrm{lt}(n)}$ is a rectangular matrix that satisfies $D_n \vecs(A) = \mathrm{vec}(A)$ for any $A \in \mathcal{S}_n$.
    \item The embedding matrix of dimension $n$, $E_n \in \mathbb{R}^{n^2 \times \mathrm{lt}(n)}$ is a rectangular matrix that satisfies $E_n \vecs(X) = \mathrm{vec}(X)$ for any $X \in \mathcal{L}_n$.
    \item The commutation matrix $M_n  \in \mathbb{R}^{n^2\times n^2}$ is a matrix that satisfies $M_n \mathrm{vec}(A)=\mathrm{vec}(A^\top)$.
    \item The index mapping $\mathrm{id} : \left\{ 1,\ldots, \mathrm{lt}(n)  \right\} \to \left\{ (i,j): 1\leq j \leq i \leq n \right\}$ denotes the inverse of the $\vecs$ indexing.
\end{enumerate}
\end{definition}
Since $D_n$ is full rank, then $D_n^+ = (D_n^\top D_n)^{-1}D_n^\top \in \mathbb{R}^{\mathrm{lt}(n)\times n^2}$, and $D_n^+ \mathrm{vec}(A)=\vecs(A)$. The same also holds for $E_n$.
\begin{lem}\label{lem:vecs_kroneker}
  The following statements hold for any $L,X \in \mathcal{L}_n$ and $S \in \mathcal{S}_n$
  \begin{enumerate}
    \item
  \begin{equation}
    \vecs(L X) = E_n^+ \left( I_n \otimes L \right)E_n \vecs(X).\label{eq:vecs_LX}
  \end{equation}
  \item
  \begin{equation}
    \vecs(L S L^{\top}) = D_n^+ \left( L \otimes L \right)D_n \vecs(S).\label{eq:vecs_LSL}
  \end{equation}
  \end{enumerate}
\end{lem}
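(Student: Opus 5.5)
Both identities reduce to the standard vectorization rule $\mathrm{vec}(ABC) = (C^\top \otimes A)\,\mathrm{vec}(B)$, combined with the defining properties of $E_n$, $D_n$ and their pseudo-inverses from Definition \ref{def:Dn_Mn_id}. The plan is to write the product as a $\mathrm{vec}$ of a matrix in the appropriate class, apply the Kronecker rule, reinsert $\vecs$ via $E_n$ or $D_n$, and then remove the $\mathrm{vec}$ using the pseudo-inverse. This last step is legitimate only when the outcome is a lower triangular or symmetric matrix, respectively.

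For the first identity, take $L, X \in \mathcal{L}_n$. Write $LX = L X I_n$, so that $\mathrm{vec}(LX) = (I_n \otimes L)\,\mathrm{vec}(X)$. Since $X$ is lower triangular, $\mathrm{vec}(X) = E_n \vecs(X)$, which gives $\mathrm{vec}(LX) = (I_n\otimes L)E_n\vecs(X)$. The product of two lower triangular matrices is lower triangular, so $LX \in \mathcal{L}_n$. The remark after Definition \ref{def:Dn_Mn_id} therefore applies: $E_n^+\mathrm{vec}(LX) = \vecs(LX)$. Multiplying the previous identity on the left by $E_n^+$ yields \eqref{eq:vecs_LX}.

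For the second identity, take $S \in \mathcal{S}_n$. Then $\mathrm{vec}(LSL^\top) = ((L^\top)^\top \otimes L)\,\mathrm{vec}(S) = (L\otimes L)D_n\vecs(S)$, where the last step uses the defining property of $D_n$ on symmetric matrices. Since $(LSL^\top)^\top = LS^\top L^\top = LSL^\top$, the matrix $LSL^\top$ is symmetric. Hence $D_n^+ \mathrm{vec}(LSL^\top) = \vecs(LSL^\top)$, and left-multiplying by $D_n^+$ gives \eqref{eq:vecs_LSL}.

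The only point needing care is the left-inverse step, i.e.\ that $E_n^+E_n = I$ and $D_n^+D_n = I$ and that $E_n^+$, $D_n^+$ recover $\vecs$ on their respective classes. This follows from full column rank: $E_n$ is a selection matrix with orthonormal columns, and $D_n$ has independent columns because each $\vecs$ entry occupies distinct positions of $\mathrm{vec}(A)$. One must also fix a consistent ordering between $\vecs$ (diagonal-by-diagonal) and $\mathrm{vec}$ (column stacking) when building $E_n$ and $D_n$. Neither identity depends on which ordering is chosen, only that the same one is used throughout. I would also note that $E_n^+$ must be applied only to the lower triangular vector $\mathrm{vec}(LX)$: on a general vector, $E_n^+$ merely discards the upper entries, which is harmless here because those entries vanish.
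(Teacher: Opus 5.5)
Your proposal is correct and follows essentially the same route as the paper: apply $\mathrm{vec}(ABC) = (C^\top \otimes A)\,\mathrm{vec}(B)$, substitute $\mathrm{vec}(X) = E_n \vecs(X)$ or $\mathrm{vec}(S) = D_n \vecs(S)$, and recover $\vecs$ using the pseudo-inverse. You also state explicitly what the paper leaves implicit: $LX \in \mathcal{L}_n$ and $LSL^\top \in \mathcal{S}_n$, which is exactly what makes that final pseudo-inverse step valid.
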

\begin{proof}
  In vectorized form:
  \begin{equation*}
    \mathrm{vec}(L X) = \left( I_n \otimes L \right)\mathrm{vec}(X)
  \end{equation*}
  Eq. \eqref{eq:vecs_LX} then follows by the definition of $E_n$. Eq. \eqref{eq:vecs_LSL} can be obtained similarly  using \cite[Proposition 7.19]{bernstein2009}
  \begin{equation*}
     \mathrm{vec}(L S L^\top) = \left( L \otimes L \right)\mathrm{vec}(S).
  \end{equation*}
\end{proof}
\begin{lem}\label{lem:d_nu_dt_explicit}
  Given $L \in \mathcal{L}^+_n$, define the congruence matrix of $L$, as $C(L)$:
  \begin{equation}
    C(L) \coloneqq D_{n}^+\left( L^{-1}\otimes L^{-1} \right)D_{n}.
  \end{equation}
  Further, define $\Delta(L) \coloneqq \vecl_{\ast,L} E_{n}^+( I_{n} \otimes L )E_{n}$ and $\tilde{K}_{k,k} = \frac{1}{2}$ for $k \leq n$, $\tilde{K}_{k,k} = 1$ for $k > n$.
  If $P_t \in \mathcal{S}^+_n$ has Cholesky factorization $L_t$ for any time $t$, and $\tfrac{d P_t}{dt} = -\vecs^{-1}(\xi_t)$, then the log-Cholesky coordinate $\nu_t = \vecl\circ \mathscr{L}$ follows the following ODE:
  \begin{equation}
    \dfrac{d \nu_t}{dt} = -\Delta(L_t)\tilde{K}C(L_t)\xi_t. \label{eq:dnu_dt}
  \end{equation}
\end{lem}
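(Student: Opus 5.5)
The plan is to differentiate the Cholesky identity $P_t = L_t L_t^\top$ and solve for $\dot L_t$ in closed form using the operator $(\cdot)_{\frac{1}{2}}$ from Section~\ref{sec:Notation}. Then vectorize each step with Lemma~\ref{lem:vecs_kroneker}, and finally apply the chain rule through $\vecl_{\ast,L}$.

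\textbf{Step 1: solve for $\dot L_t$.} Differentiating gives $\dot P = \dot L L^\top + L \dot L^\top$. Multiplying by $L^{-1}$ on the left and $L^{-\top}$ on the right gives
\begin{equation*}
L^{-1}\dot P L^{-\top} = L^{-1}\dot L + (L^{-1}\dot L)^\top .
\end{equation*}
Since $L^{-1}\dot L$ is lower triangular, it is the sum of a strictly lower triangular part and a diagonal part. Hence it is recovered from the symmetric matrix $S \coloneqq L^{-1}\dot P L^{-\top}$ as
\begin{equation*}
L^{-1}\dot L = (S)_{\frac{1}{2}} = \lfloor S\rfloor + \tfrac{1}{2}\mathbb{D}(S).
\end{equation*}
This is the standard formula for the differential of the Cholesky map, and it gives $\dot L = L\,(L^{-1}\dot P L^{-\top})_{\frac{1}{2}}$.

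\textbf{Step 2: vectorize.} By the second item of Lemma~\ref{lem:vecs_kroneker}, with $L^{-1}$ in place of $L$ (it is again lower triangular),
\begin{equation*}
\vecs(S) = D_n^+(L^{-1}\otimes L^{-1})D_n\vecs(\dot P) = C(L)\vecs(\dot P).
\end{equation*}
The operation $S \mapsto (S)_{\frac{1}{2}}$ keeps the strictly lower entries and halves the diagonal. Because $\vecs$ lists the main diagonal first, in the $\vecs$ coordinates this operation is exactly multiplication by the diagonal matrix $\tilde K$ (one half on the first $n$ entries, one elsewhere), so $\vecs((S)_{\frac{1}{2}}) = \tilde K\,\vecs(S)$. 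Note that $(S)_{\frac{1}{2}}$ is lower triangular while $S$ is symmetric; I will state explicitly that $\vecs$ of the triangular matrix is read through $E_n$ and $\vecs$ of the symmetric one through $D_n$.

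Next, the first item of Lemma~\ref{lem:vecs_kroneker} with $X = (S)_{\frac{1}{2}}\in\mathcal{L}_n$ gives
\begin{equation*}
\vecs(\dot L) = E_n^+(I_n\otimes L)E_n\,\tilde K\, C(L)\,\vecs(\dot P).
\end{equation*}

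\textbf{Step 3: chain rule.} Since $\nu = \vecl(L)$ applies $\log$ entrywise to the diagonal block, the time derivative is $\dot\nu = \vecl_{\ast,L}\vecs(\dot L)$, using the explicit diagonal Jacobian already given. Substituting $\vecs(\dot P) = -\xi_t$ and collecting $\vecl_{\ast,L}E_n^+(I_n\otimes L)E_n = \Delta(L)$ yields \eqref{eq:dnu_dt}.

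\textbf{Main obstacle.} The delicate point is the bookkeeping of the $\vecs$ ordering. I need to confirm that the diagonal-first ordering of $\vecs$ matches the ordering used by $\vecl$, so that $\vecl_{\ast,L}$ acts entrywise on $\vecs(\dot L)$. I also need to confirm that $\tilde K$ correctly represents $(\cdot)_{\frac{1}{2}}$. I would check both by writing out the index map $\mathrm{id}$ for small $n$.
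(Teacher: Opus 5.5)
Your proof is correct and follows the paper's argument essentially step for step: both items of Lemma~\ref{lem:vecs_kroneker} for the vectorization, the identity $\vecs((S)_{\frac{1}{2}}) = \tilde K \vecs(S)$, and the chain rule through the diagonal Jacobian $\vecl_{\ast,L}$. The only difference is that you derive $\dot L = L\,(L^{-1}\dot P L^{-\top})_{\frac{1}{2}}$ yourself by differentiating $P = LL^\top$ and using that $L^{-1}\dot L$ is lower triangular, whereas the paper imports this formula from Proposition 4 of Lin's log-Cholesky paper (in both versions, differentiability of $L_t$ comes from the smoothness of the Cholesky map).
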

\begin{proof}
  Using \cite[Proposition 4]{lin2019}, with $S_t = L^{-1}\dfrac{d P_t}{dt}L^{-\top}$, the time derivative of $L_t$ is given by
  $
    \dfrac{d L_t}{dt} = L_t \left(S_t\right)_{\frac{1}{2}}.
  $
  Using Lemma \ref{lem:vecs_kroneker}, we get
  \begin{align*}
    \vecs\left(\dfrac{d L_t}{dt}\right) =& E_{n}^+\left( I_{n} \otimes L_t \right)E_{n}\vecs(\left(S_t\right)_{\frac{1}{2}})\\
    \vecs(S_t) =& D_{n}^+\left( L^{-1}\otimes L^{-1} \right)D_{n} \vecs\left(\frac{d P_t}{dt}\right).
  \end{align*}
  Moreover,  $\vecs((S_t)_{\frac{1}{2}}) = \tilde{K}\vecs(S_t)$. Hence, we obtain:
  \begin{equation*}
    \vecs\left(\dfrac{d L_t}{dt}\right) =  -E_{n}^+\left( I_{n} \otimes L_t \right)E_{n}  \tilde{K} D_{n}^+\left( L_t^{-1}\otimes L_t^{-1} \right)D_{n}\xi_t
  \end{equation*}
  Using this equation, and the definition of $\vecl_\ast$, we obtain \eqref{eq:dnu_dt}.
\end{proof}

Using the lemmas above, we can finally have the following result:
\begin{prop}\label{prop:theta_l_nu_ode}
Let $\theta_k$ be the natural parameters at time $t_k = k \Delta t$, for some $k \in \mathbb{N}$, and measurement sampling time $\Delta t$. Then the sum-of-square relaxation of the natural parameters $\theta_t$ between two consecutive measurement sampling times $t \in (k \Delta t, (k+1)\Delta t)$ can be written compactly as
\begin{align}
  \dfrac{d}{dt} \begin{bmatrix}
    \theta_{l,t}\\
    \nu_t
  \end{bmatrix}
   = \begin{bmatrix}
    I_{m_l} & 0_{m_l \times m_u}\\
    0_{m_s \times m_l} & -\Delta(L_t)\tilde{K} C(L_t) T_p^+
   \end{bmatrix}
   g(\theta_t)^{-1} \frac{d \eta_t}{d t}, \label{eq:ODE_SOS_relaxation}
\end{align}
where $m_s = \mathrm{lt}(z_1)$,$\theta_t^\top = [\theta_{l,t}^\top, \theta_{u,t}^\top]^\top$, $\theta_{u,t} = -T_p \vecs(L_t L_t^\top)$,  and $L_t = \vecl^{-1}(\nu_t)$.
\end{prop}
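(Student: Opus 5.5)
The proof is an assembly of the pieces already derived in this section, one block row of \eqref{eq:ODE_SOS_relaxation} at a time.

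\textbf{First block row.} This is simply the $\theta_l$-component of \eqref{eq:Projected_Fokker_Planck}. Writing $\frac{d\eta_t}{dt}=\expvalb{\theta_t}{\mathcal{L}[c]}$, the projected Fokker--Planck equation reads $\frac{d\theta_t}{dt}=g(\theta_t)^{-1}\frac{d\eta_t}{dt}$. Left-multiplying by $\begin{bmatrix}I_{m_l} & 0_{m_l\times m_u}\end{bmatrix}$ gives the first row. I will take care to keep the partition ordering of $\theta$ consistent, so that the selector matrices agree with the ordering $\theta^\top=[\theta_l^\top,\theta_u^\top]$ used in the statement.

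\textbf{Second block row, reduction to Lemma~\ref{lem:d_nu_dt_explicit}.}
\begin{enumerate}
\item Start from \eqref{eq:dP_dt_explicit}, which gives $\frac{dP_t}{dt}=-\vecs^{-1}(\xi_t)$ with
\[
\xi_t\coloneqq T_p^+\begin{bmatrix}0_{m_u\times m_l} & I_{m_u}\end{bmatrix}g(\theta_t)^{-1}\tfrac{d\eta_t}{dt}.
\]
This is exactly the hypothesis of Lemma~\ref{lem:d_nu_dt_explicit} with $n=z_1$.
\item Justify that $P_t$ stays in $\mathcal{S}^+_{z_1}$, so that $L_t=\mathscr{L}(P_t)\in\mathcal{L}^+_{z_1}$ is well defined. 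Because $\nu_t$ is a global chart, any trajectory $\nu_t\in\mathbb{R}^{m_s}$ yields $L_t=\vecl^{-1}(\nu_t)\in\mathcal{L}^+_{z_1}$ and hence $P_t=L_tL_t^\top\succ0$.
\item Lemma~\ref{lem:d_nu_dt_explicit} then gives
\[
\frac{d\nu_t}{dt}=-\Delta(L_t)\tilde{K}C(L_t)\,\xi_t .
\]
Substituting $\xi_t$, the $\nu$-row equals $-\Delta(L_t)\tilde{K}C(L_t)T_p^+$ applied to the $\theta_u$-selection of $g^{-1}\frac{d\eta_t}{dt}$. Stacking the two rows, with the $u$-selector absorbed into the $(2,2)$ block acting on the $\theta_u$ part of $g^{-1}\frac{d\eta_t}{dt}$, yields \eqref{eq:ODE_SOS_relaxation}.
\end{enumerate}

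\textbf{Closing the system.} It remains to express $g(\theta_t)$ and $\frac{d\eta_t}{dt}$ in terms of the state $(\theta_{l,t},\nu_t)$. From Proposition~\ref{prp:T_p}, $\theta_{u,t}=-T_p\vecs(P_t)=-T_p\vecs(L_tL_t^\top)$ with $L_t=\vecl^{-1}(\nu_t)$. So $\theta_t$, and therefore the right-hand side, is a smooth function of $(\theta_{l,t},\nu_t)$, and the ODE is closed.

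\textbf{Main obstacle: consistency of the representation.} $T_p$ has a large kernel, so $P_t=-\vecs^{-1}(T_p^+\theta_{u,t})$ is only one choice of Gram matrix. I must check that the propagated $P_t$ still reproduces $\theta_{u,t}$, i.e.\ that $-T_p\vecs(P_t)=\theta_{u,t}$ along the flow. Differentiating, this needs $T_pT_p^+\dot\theta_{u}=\dot\theta_u$. This holds because $T_p$ is full row rank: $T$ is full row rank, $K$ is invertible, so $T_pT_p^+=I$.

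Thus the relaxation is exact on the $\theta_u$-dynamics. The only approximation is the initial projection onto $\Theta_{SOS}$ at $t_k$, which fixes $P_{t_k}=P_k$. With that settled, $\nu_t$ is a valid global coordinate for $\theta_{u,t}$ within $\Theta_{SOS}$ on $(k\Delta t,(k+1)\Delta t)$, which completes the argument.
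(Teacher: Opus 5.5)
Your proposal is correct and follows the paper's route: the paper's proof is just an application of Lemma~\ref{lem:d_nu_dt_explicit} with $n=z_1$ and $\xi_t = T_p^+$ applied to the $\theta_u$-block of $g(\theta_t)^{-1}\frac{d\eta_t}{dt}$, read off from \eqref{eq:dP_dt}, which is exactly your second block row. Your added consistency check is a worthwhile supplement the paper leaves implicit: $T_p$ has full row rank, so $T_pT_p^+=I$, and hence $-T_p\vecs(P_t)$ keeps tracking $\theta_{u,t}$ along the flow. One imprecision: your closing claim that $\nu_t$ is a global coordinate for $\theta_{u,t}$ is not right, since $\nu_t$ is a global chart only for $P_t$ and, because $T_p$ has a nontrivial kernel, it gives merely a smooth surjective over-parameterization of $\theta_{u,t}$.
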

\begin{proof}
  From Lemma \ref{lem:d_nu_dt_explicit}, with $n=z_1, \xi=-\vecs(\tfrac{d P_t}{d t})$ where $\tfrac{d P_t}{d t}$ given by \eqref{eq:dP_dt}, we arrived at the desired result.
\end{proof}

Using Proposition \ref{prop:theta_l_nu_ode}, we can use a standard ODE solver to solve \eqref{eq:ODE_SOS_relaxation} between two measurement sampling times; $t \in (k \Delta t, (k+1)\Delta t)$. At the end of the propagation time $t = (k+1)\Delta t = t_{k+1}$, we recover $P_{k+1} = P_{t_{k+1}}$ from $\nu_{t_{k+1}}$ via $P_{k+1} = \mathscr{L}^{-1} \circ \vecl^{-1} (\nu_{t_{k+1}})$, and then we recover $\theta_{u,k+1}$ via $\theta_{u,k+1} = - T_p \vecs(P_{k+1})$. After obtaining $\theta_{k+1} = \begin{bmatrix}\theta_{l,k+1}^\top & \theta_{u,k+1}^\top\end{bmatrix}^\top$, we perform the Bayesian update step. Since the exponential family considered here is assumed to be conjugate with respect to the likelihood, then the Bayesian update follows the exact procedure given in \cite{emzir2025}.
\begin{rem}
  Notice that the ODE \eqref{eq:ODE_SOS_relaxation} ensures that $P_t = \mathscr{L}^{-1} \circ \vecl^{-1}(\nu_t)$ to be always positive definite. Hence $\theta_{u,t}$ always corresponds to negative of a sum-of-square form, which ensures that $\theta_t \in \Theta_{SOS} \subset \Theta$ at any time. Unlike the semi-infinite-programming procedure we have developed previously \cite{emzir2025b}, this method does not require an evaluation of optimization method during the propagation of the natural parameter vectors' ODE.
\end{rem}
\subsection{Gaussian Case}\label{sec:Numerical_Examples}
To illustrate the practical application of our framework, we now specialize the general ODE \eqref{eq:ODE_SOS_relaxation} to the Gaussian case. For the Gaussian case, we define $c_l = x$ and $c_u = \vecs(x x^\top) = w_2$; i.e., $r=1$. As we mentioned earlier, in this case, $\mathcal{P}_{[x],r} = \Sigma_{[x],r}$, which means the SOS solution is exactly the same as the semi-infinite programming solution.
This in turn makes $m_l=d, m_u = \mathrm{lt}(d)$. The natural parameter vectors are
\begin{equation}
  \theta_l = \Sigma^{-1}\mu, \qquad \theta_u = -\tfrac{1}{2}K\vecs(\Sigma^{-1}); \label{eq:theta_Gaussian}
\end{equation}
see also \cite[Section 3.2]{malago2015}. The SOS matrix is $P = \frac{1}{2}\Sigma^{-1}$, and since $w_2 = w_{1^2,\mathrm{sym}}$ we have $T=I_{\mathrm{lt}(d)}$ and $T_p = K$.
In Gaussian case, we can explicitly compute the Fisher metric $g(\theta)$ from $\mu,\Sigma$. In what follows we give a compact form of $g(\theta)$ for Gaussian density using Kronecker multiplication. Related derivation can also be found in \cite[Section 3.5]{malago2015}.
\begin{lem}
  The Fisher metric $g(\theta)$ with $c_l = x$ and $c_u = \vecs(x x^\top)$ and $\theta$ given by \eqref{eq:theta_Gaussian} can be written as the following block matrix
  \begin{equation}
    g(\theta) = \begin{bmatrix}
      g_{ll} & g_{l u}\\
      g_{lu}^\top & g_{u u}\\
    \end{bmatrix}\label{eq:g_Gaussian}
  \end{equation}
  where (for brevity, we suppress the dependence on $\theta$)
  \begin{subequations}
    \begin{align}
    g_{ll} =& \Sigma,\label{eq:g_ll}\\
    g_{lu} =& \Sigma E_\mu,\label{eq:g_lu}\\
    g_{uu} =& g_{uu}^{(0)} + E_\mu^\top \Sigma E_\mu,\label{eq:g_uu}\\
    g_{uu}^{(0)} =& D_d^+\left( \Sigma \otimes \Sigma \right)\left( I + M_d \right)(D_d^+)^\top,\label{eq:g_uu_0}
    \end{align}
  \end{subequations}
and $E_\mu = ( I_d \otimes \mu^\top + \mu^\top \otimes I_d )(D_d^+)^\top$.
\end{lem}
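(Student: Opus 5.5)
The plan is to use the fact that for a regular exponential family the Fisher metric equals the covariance of the natural statistics, $g(\theta) = \nabla^2\psi(\theta) = \mathrm{Cov}_\theta[c(x)]$. With $c = [c_l^\top, c_u^\top]^\top = [x^\top, \vecs(xx^\top)^\top]^\top$ and $x \sim \mathcal{N}(\mu,\Sigma)$, the blocks become
\begin{equation*}
g_{ll} = \mathrm{Cov}(x), \qquad g_{lu} = \mathrm{Cov}\bigl(x, \vecs(xx^\top)\bigr), \qquad g_{uu} = \mathrm{Cov}\bigl(\vecs(xx^\top)\bigr).
\end{equation*}
Immediately $g_{ll} = \Sigma$, which gives \eqref{eq:g_ll}. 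Throughout, I will write $\vecs(xx^\top) = D_d^+\mathrm{vec}(xx^\top) = D_d^+(x\otimes x)$ and set $x = \mu + z$ with $z \sim \mathcal{N}(0,\Sigma)$.

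For the cross block, expand
\begin{equation*}
x\otimes x = \mu\otimes\mu + \mu\otimes z + z\otimes\mu + z\otimes z.
\end{equation*}
The term $z\otimes z$ has zero covariance with $z$ because odd Gaussian moments vanish. This leaves
\begin{equation*}
g_{lu} = \E\bigl[z(\mu\otimes z + z\otimes\mu)^\top\bigr](D_d^+)^\top.
\end{equation*}
Since $\mu\otimes z = (\mu\otimes I_d)z$ and $z\otimes\mu = (I_d\otimes\mu)z$, this equals
\begin{equation*}
\Sigma(\mu^\top\otimes I_d + I_d\otimes\mu^\top)(D_d^+)^\top = \Sigma E_\mu,
\end{equation*}
which is \eqref{eq:g_lu}.

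For $g_{uu}$, the linear part $A z$, with $A \coloneqq (\mu\otimes I_d + I_d\otimes\mu) = E_\mu'$ up to the $D_d^+$ factor, is uncorrelated with the quadratic part $z\otimes z$, again by vanishing third moments. So the covariance splits into two pieces:
\begin{equation*}
D_d^+\,\mathrm{Cov}(z\otimes z)\,(D_d^+)^\top \quad\text{and}\quad D_d^+ A\Sigma A^\top (D_d^+)^\top = E_\mu^\top\Sigma E_\mu.
\end{equation*}
For the quadratic piece I would use the Isserlis/Wick formula, which gives the known identity
\begin{equation*}
\mathrm{Cov}(z\otimes z) = (I+M_d)(\Sigma\otimes\Sigma) = (\Sigma\otimes\Sigma)(I+M_d),
\end{equation*}
using that $M_d$ commutes with $\Sigma\otimes\Sigma$. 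This yields $g_{uu}^{(0)}$ and hence \eqref{eq:g_uu}--\eqref{eq:g_uu_0}.

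The main obstacle is bookkeeping rather than anything conceptual. First, I must check that the parametrization \eqref{eq:theta_Gaussian}, with the factor $K$, really makes $c_u = \vecs(xx^\top)$ pair with $\theta_u$ correctly. Only the statistic enters the covariance formula, so the metric in the $\theta$ coordinates is $\mathrm{Cov}(c)$ regardless, but the pairing should still be confirmed. Second, I must make sure $\vecs$ uses $D_d^+$ consistently with its diagonal-wise ordering, so that $D_d^+\mathrm{vec}(S) = \vecs(S)$ holds for symmetric $S$ as stated after Definition \ref{def:Dn_Mn_id}. Third, the Wick identity should be verified entrywise: $\E[z_iz_jz_kz_l] - \Sigma_{ij}\Sigma_{kl} = \Sigma_{ik}\Sigma_{jl} + \Sigma_{il}\Sigma_{jk}$, and these two terms correspond to the entries of $\Sigma\otimes\Sigma$ and $M_d(\Sigma\otimes\Sigma)$.
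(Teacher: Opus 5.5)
Your proposal is correct and follows essentially the same route as the paper: it uses $g(\theta)=\mathrm{Cov}_\theta[c]$, writes $\vecs(xx^\top)=D_d^+(x\otimes x)$, and applies Isserlis' theorem to get the zero-mean fourth-moment block $(\Sigma\otimes\Sigma)(I+M_d)$. Your centering $x=\mu+z$, together with vanishing third moments, spells out the split into the zero-mean part and the mean-dependent part $D_d^+A\Sigma A^\top(D_d^+)^\top=E_\mu^\top\Sigma E_\mu$; the paper only states that this split holds.
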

\begin{proof}
    This result follows from Isserlis' theorem and the identity
  $g(\theta) = \mathrm{Cov}\left(\begin{bmatrix} x\\ \vecs(x x^\top) \end{bmatrix}\right)$.
  It is obvious that $g_{ll} = \Sigma$. Since $\vecs(x x^\top) = D_d^+ \mathrm{vec}(x x^\top)
  = D_d^+ (x \otimes x)$, the cross-covariance is
  \begin{equation*}
    g_{lu} = \mathrm{Cov}[x,\, D_d^+(x \otimes x)]
    = \mathrm{Cov}[x,\, x \otimes x]\,(D_d^+)^\top.
  \end{equation*}
  By Isserlis' theorem, $\mathrm{Cov}[x_i, x_p x_q] = \mu_p \Sigma_{iq} + \mu_q \Sigma_{ip}$,
  which in Kronecker form gives
  $\mathrm{Cov}[x,\, x \otimes x] = \Sigma(I_d \otimes \mu^\top + \mu^\top \otimes I_d)$.
  Hence $g_{lu} = \Sigma(I_d \otimes \mu^\top + \mu^\top \otimes I_d)(D_d^+)^\top = \Sigma E_\mu$.
  Similarly, the covariance of the quadratic statistics is
  \begin{equation*}
    g_{uu} = D_d^+\, \mathrm{Cov}[x \otimes x]\, (D_d^+)^\top.
  \end{equation*}
  By Isserlis' theorem, $\mathrm{Cov}[(x\otimes x)_{(p,q)},\, (x\otimes x)_{(p',q')}]$
  decomposes into a mean-dependent part and a zero-mean part
  $\Sigma_{pp'}\Sigma_{qq'} + \Sigma_{pq'}\Sigma_{qp'}
  = [(\Sigma \otimes \Sigma)(I + M_d)]_{(p,q),(p',q')}$,
  yielding $g_{uu}^{(0)} = D_d^+(\Sigma \otimes \Sigma)(I + M_d)(D_d^+)^\top$.
  The mean-dependent terms collect into $E_\mu^\top \Sigma E_\mu$,
  giving \eqref{eq:g_uu}.
\end{proof}
\begin{lem}
  The Fisher metric \eqref{eq:g_Gaussian} possesses the following properties:
  \begin{enumerate}
    \item The Schur complement of $g_{ll}$ in $g(\theta)$  simplifies to:
  \begin{equation}
    g(\theta)/g_{ll} = g_{uu}^{(0)}.
  \end{equation}
  \item The Fisher metric's inverse is given by
  \begin{equation}
    g(\theta)^{-1} = \begin{bmatrix}
    \Sigma^{-1} + E_\mu (g_{uu}^{(0)})^{-1} E_\mu^\top & -E_\mu (g_{uu}^{(0)})^{-1} \\
    -(g_{uu}^{(0)})^{-1} E_\mu^\top & (g_{uu}^{(0)})^{-1}
  \end{bmatrix} \label{eq:fisher_inverse}
  \end{equation}
  \end{enumerate}
\end{lem}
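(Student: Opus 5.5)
The plan is to apply the standard block-matrix identities directly to the block form of $g(\theta)$ given in the previous lemma. The only nontrivial ingredient is the invertibility of $g_{ll}=\Sigma$ and of $g_{uu}^{(0)}$. $\Sigma$ is invertible because it is a covariance matrix of a nondegenerate Gaussian. For $g_{uu}^{(0)}$, note that $g(\theta)$ is positive definite, being the covariance of linearly independent statistics. The Schur complement of a positive definite matrix is positive definite, so once item 1 is established, invertibility of $g_{uu}^{(0)}$ follows for free.

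For item 1, we compute $g(\theta)/g_{ll} = g_{uu} - g_{lu}^\top g_{ll}^{-1} g_{lu}$. Substituting $g_{lu}=\Sigma E_\mu$ and $g_{ll}=\Sigma$ from \eqref{eq:g_ll}--\eqref{eq:g_lu} gives
\[
g_{lu}^\top g_{ll}^{-1} g_{lu} = E_\mu^\top \Sigma^\top \Sigma^{-1}\Sigma E_\mu = E_\mu^\top \Sigma E_\mu,
\]
using the symmetry of $\Sigma$. Subtracting this from \eqref{eq:g_uu} cancels the mean-dependent term exactly and leaves $g_{uu}^{(0)}$.

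For item 2, we use the block inversion formula based on the Schur complement of the $(1,1)$ block. Write $S = g(\theta)/g_{ll} = g_{uu}^{(0)}$. The formula gives
\[
g^{-1} = \begin{bmatrix} g_{ll}^{-1} + g_{ll}^{-1} g_{lu} S^{-1} g_{lu}^\top g_{ll}^{-1} & -g_{ll}^{-1} g_{lu} S^{-1} \\ -S^{-1} g_{lu}^\top g_{ll}^{-1} & S^{-1} \end{bmatrix}.
\]
Now substitute $g_{ll}^{-1}g_{lu} = \Sigma^{-1}\Sigma E_\mu = E_\mu$ and $g_{lu}^\top g_{ll}^{-1} = E_\mu^\top$. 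These reduce the blocks to $\Sigma^{-1} + E_\mu (g_{uu}^{(0)})^{-1}E_\mu^\top$, $-E_\mu (g_{uu}^{(0)})^{-1}$, its transpose, and $(g_{uu}^{(0)})^{-1}$, which is \eqref{eq:fisher_inverse}. As a sanity check, one can multiply the claimed inverse by $g(\theta)$ and confirm the identity blockwise; this is routine.

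The main obstacle, though a mild one, is justifying that $g_{uu}^{(0)}$ is invertible. Besides the positive-definiteness argument above, one can argue directly. $(I+M_d)$ acts as $2$ times the identity on $\mathrm{vec}$ of symmetric matrices, $\Sigma\otimes\Sigma$ maps that subspace to itself, and $D_d^+$ restricted to that subspace is a bijection onto $\mathbb{R}^{\mathrm{lt}(d)}$. It follows that $g_{uu}^{(0)} = 2D_d^+(\Sigma\otimes\Sigma)(D_d^+)^\top$ on the appropriate range, which is positive definite. I would rely on the positive-definiteness argument and mention the direct one only as a remark.
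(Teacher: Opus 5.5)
Your proposal is correct and follows the paper's own argument. Substituting $g_{lu}=\Sigma E_\mu$ makes $g_{lu}^\top g_{ll}^{-1}g_{lu}=E_\mu^\top\Sigma E_\mu$ cancel the mean-dependent part of $g_{uu}$, and the inverse then comes from the standard Schur-complement block inversion.

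Your justification that $g_{uu}^{(0)}$ is invertible is a useful addition that the paper leaves implicit. Strictly, a covariance matrix is positive definite only when the statistics are affinely independent, i.e., no nontrivial combination is almost surely constant; linear independence alone is not enough, but the stronger condition holds here. Your direct argument is in fact an exact identity: the range of $(D_d^+)^\top$ lies in the vectorized symmetric matrices, so $g_{uu}^{(0)}=2D_d^+(\Sigma\otimes\Sigma)(D_d^+)^\top\succ 0$.
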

\begin{proof}
Using $g_{lu} = \Sigma E_\mu$ and $g_{uu} = g_{uu}^{(0)} + E_\mu^\top \Sigma E_\mu$, we obtain
  $
  g/g_{ll} = g_{uu} - g_{ul} g_{ll}^{-1} g_{lu} = g_{uu}^{(0)}.
  $
Eq. \eqref{eq:fisher_inverse} follows from the Schur complement form.
\end{proof}
We finally state our last result that specializes Proposition \ref{prop:theta_l_nu_ode} for the Gaussian family.
\begin{prop}\label{prop:theta_l_nu_ode_Gaussian}
Let $\theta_k$ be the natural parameters at time $k \Delta t$, for some $k \in \mathbb{N}$, and measurement sampling time $\Delta t$. Then the sum-of-square relaxation of the natural parameters $\theta_t$ between two consecutive measurement sampling times $t \in (k \Delta t, (k+1)\Delta t)$ can be written compactly as
\begin{align}
  \footnotesize
  \dfrac{d}{dt} \begin{bmatrix}
    \theta_{l,t}\\
    \nu_t
  \end{bmatrix}
   = \begin{bmatrix}
    \Sigma^{-1} \frac{d \eta_l}{dt} + E_\mu (g_{uu}^{(0)})^{-1}\left( E_\mu^\top \frac{d \eta_l}{dt} - \frac{d \eta_u}{dt}  \right)\\
    -\Delta(L_t)\tilde{K}C(L_t)K^{-1}(g_{uu}^{(0)})^{-1}\left( \frac{d \eta_u}{dt} - E_\mu^\top \frac{d \eta_l}{dt}\right)
   \end{bmatrix}
 , \label{eq:ODE_SOS_relaxation_Gaussian}
\end{align}
where $\eta_l = \E_{\theta}[c_l], \eta_u = \E_{\theta}[c_u]$.
\end{prop}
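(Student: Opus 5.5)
The plan is to specialize Proposition \ref{prop:theta_l_nu_ode} by substituting the Gaussian data established just above: $T = I_{\mathrm{lt}(d)}$, $T_p = K$, $z_1 = d$, and the block inverse \eqref{eq:fisher_inverse} of the Fisher metric. Partition $\tfrac{d\eta_t}{dt} = \E_\theta[\mathcal{L}(c)]$ conformally with $c^\top = [c_l^\top, c_u^\top]$ as $[\tfrac{d\eta_l}{dt}^\top, \tfrac{d\eta_u}{dt}^\top]^\top$, so that $\tfrac{d\eta_l}{dt}$ and $\tfrac{d\eta_u}{dt}$ are the time derivatives of $\eta_l = \E_\theta[c_l]$ and $\eta_u = \E_\theta[c_u]$ along the projected Fokker--Planck flow. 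Then form the product $g(\theta_t)^{-1}\tfrac{d\eta_t}{dt}$ blockwise and left-multiply by the block-diagonal selector in \eqref{eq:ODE_SOS_relaxation}.

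For the first block row, the top row of \eqref{eq:fisher_inverse} gives
\[
\big(\Sigma^{-1} + E_\mu (g_{uu}^{(0)})^{-1}E_\mu^\top\big)\tfrac{d\eta_l}{dt} - E_\mu (g_{uu}^{(0)})^{-1}\tfrac{d\eta_u}{dt}.
\]
Collecting the terms in $E_\mu (g_{uu}^{(0)})^{-1}$ yields exactly the first entry of \eqref{eq:ODE_SOS_relaxation_Gaussian}. For the second block row, the bottom row of \eqref{eq:fisher_inverse} gives
\[
(g_{uu}^{(0)})^{-1}\big(\tfrac{d\eta_u}{dt} - E_\mu^\top \tfrac{d\eta_l}{dt}\big).
\]
This is the $u$-component of $\tfrac{d\theta_t}{dt}$, and we premultiply it by $-\Delta(L_t)\tilde K C(L_t) T_p^+$.

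The only nontrivial point is identifying $T_p^+$. Since $T_p = K$ is square, diagonal and has positive entries, it is invertible, so $T_p^+ = K^{-1}$. Consistency with the Gaussian parametrization should also be checked. From \eqref{eq:theta_Gaussian}, $\theta_u = -\tfrac12 K \vecs(\Sigma^{-1}) = -K\vecs(P)$ with $P = \tfrac12\Sigma^{-1}$, which matches $\theta_u = -T_p\vecs(P)$. Hence \eqref{eq:dP_dt} becomes $\tfrac{dP_t}{dt} = -\vecs^{-1}(K^{-1}\tfrac{d\theta_{u,t}}{dt})$, which has exactly the form $-\vecs^{-1}(\xi_t)$ required by Lemma \ref{lem:d_nu_dt_explicit} with $n = d$. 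Substituting this gives the second entry, $-\Delta(L_t)\tilde K C(L_t)K^{-1}(g_{uu}^{(0)})^{-1}(\tfrac{d\eta_u}{dt} - E_\mu^\top\tfrac{d\eta_l}{dt})$.

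I expect the main obstacle to be bookkeeping rather than conceptual. One must make sure that the $\vecs$ ordering used for $c_u = \vecs(xx^\top)$ in the Fisher-metric lemma, and hence in $g_{uu}^{(0)}$ and $E_\mu$ through $D_d^+$, agrees with the ordering used in $K$, $\tilde K$, $C(L)$ and $\Delta(L)$. This also justifies $T = I$ since $w_2 = w_{1^2,\mathrm{sym}}$.

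Finally, $\mu$ and $\Sigma$ appearing on the right-hand side are to be read as functions of the state: $\Sigma = \tfrac12 (L_tL_t^\top)^{-1}$ and $\mu = \Sigma\theta_{l,t}$. With that reading the system is closed in $(\theta_{l,t},\nu_t)$, and the proof concludes.
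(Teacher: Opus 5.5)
Your proposal is correct and follows the paper's own proof: substitute the block inverse \eqref{eq:fisher_inverse} and $T_p = K$ (so $T_p^+ = K^{-1}$) into \eqref{eq:ODE_SOS_relaxation}, then collect the terms in $E_\mu (g_{uu}^{(0)})^{-1}$. Your additional checks are sound and spell out what the paper leaves implicit: that $\theta_u = -K\vecs(P)$ with $P = \tfrac{1}{2}\Sigma^{-1}$, and that $\mu = \Sigma\theta_{l,t}$ and $\Sigma = \tfrac{1}{2}(L_tL_t^\top)^{-1}$ close the system in $(\theta_{l,t},\nu_t)$.
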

\begin{proof}
  Substituting $g(\theta)^{-1}$ given in \eqref{eq:fisher_inverse} to \eqref{eq:ODE_SOS_relaxation}, and $T_p=K$ yields \eqref{eq:ODE_SOS_relaxation_Gaussian}.
\end{proof}

\section{Numerical Example}
In this section, we apply the proposed method to propagate the projected Fokker--Planck equation corresponding to the SDE for reentry model described in \cite{sarkka2007}. We set the fifth-state which is the aerodynamic ballistic coefficient parameter to be constant.  We then normalize the state such that they become dimensionless. The dynamic is given by
\begin{equation}
  \footnotesize
  dx_t =  \begin{bmatrix}
    (V_0/R_0) x_{3,t} \\ (V_0/R_0) x_{4,t} \\ D(x_t)  x_{3,t} + (G(x_t) R_0 / V_0) x_{1,t} \\ D(x_t) x_{4,t} + (G(x_t) R_0 / V_0) x_{2,t}
  \end{bmatrix} dt + \begin{bmatrix}
    0 & 0\\
    0 & 0\\
    \sigma_1/V_0 & 0\\
    0 & \sigma_2/V_0
  \end{bmatrix} dW_t.\label{eq:sde_tracking}
\end{equation}
where the drag $D(x)$, gravity $G(x)$, dimensionless distance $\hat{R}$, and dimensionless velocity $\hat{V}$ are given by
\begin{align*}
  \footnotesize
  &D(x) =b \exp(\frac{R_0}{H_0}(1 - \hat{R}(x))) \cdot V_0 \hat{V},
  \hat{V} = \sqrt{x_3^2 + x_4^2}\\
  &\hat{R}(x) = \sqrt{x_1^2 + x_2^2},
  G(x) = -Gm_0 / (R_0^3 \hat{R}(x)^3).
\end{align*}
We use the same parameters as described in \cite{sarkka2007}, except we set the initial covariance for position to be 100 km$^2$, and velocity covariance to be $10^{-2}$ (km/s)$^2$. We propagate \eqref{eq:ODE_SOS_relaxation_Gaussian} using Gauss--Kronrod sparse-grid level 5. The evolution of $\nu_t$ is shown in Figure \ref{fig:Reentry_4D}. The corresponding $\theta_{u,t}$ is guaranteed to correspond to a second order negative form and is given by $\theta_{u,t}=-K\vecs(L_t L_t^\top)$, where $L_t=\vecl^{-1}(\nu_t)$.
\begin{figure}[H]
  \includegraphics[width=\linewidth]{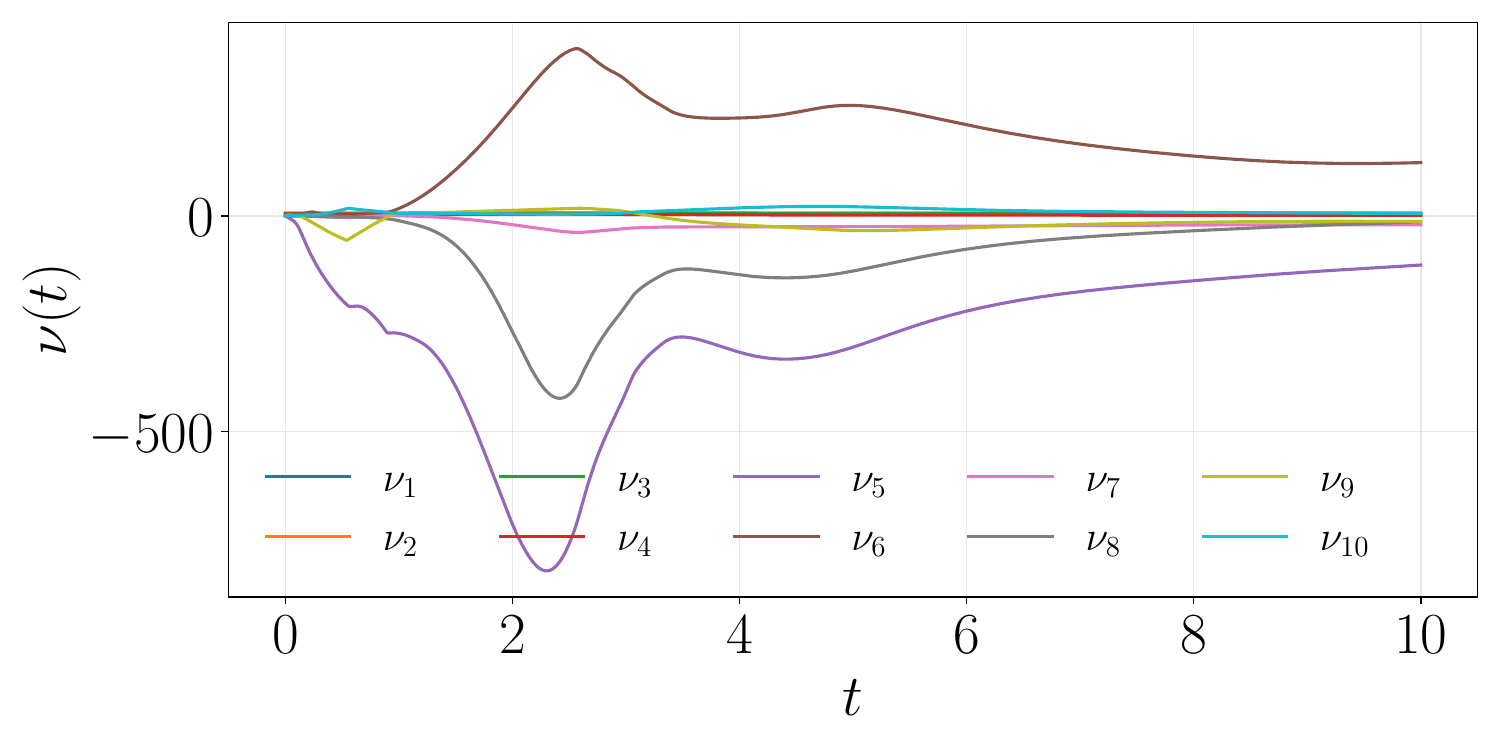}
    \caption{Evolution of $\nu_t$ for the reentry model \eqref{eq:sde_tracking} using SOS-relaxed ODE \eqref{eq:ODE_SOS_relaxation_Gaussian}.}
    \label{fig:Reentry_4D}
\end{figure}

\section{Conclusions}
In this paper, we have presented a sum-of-squares relaxation framework for continuous-discrete projection filters that ensures natural parameters remain within the feasible set by leveraging the log-Cholesky map. We derived explicit, computationally efficient ODEs for the general exponential family, and specialized the result for Gaussian family. This provides a theoretical foundation for stable filter propagation that avoids the need for online optimization.  Future work includes applying this framework to non-Gaussian exponential families with numerical validation, and investigating the tightness of the SOS relaxation for higher-dimensional state spaces.
\footnotesize
% Generated by IEEEtran.bst, version: 1.14 (2015/08/26)

\end{document}